\documentclass[a4paper,12pt]{amsart}

\usepackage{amssymb} 
\usepackage{xcolor} 
\usepackage{fancybox}
\usepackage{dashbox}
\newboolean{showcomments}
\setboolean{showcomments}{true}
\ifthenelse{\boolean{showcomments}}
{ \newcommand{\mynote}[3]{
  \fbox{\bfseries\sffamily\scriptsize#1}
  {\small$\blacktriangleright$\textsf{\emph{\color{#3}{#2}}}$\blacktriangleleft$}}}
{ \newcommand{\mynote}[3]{}}
\definecolor{asparagus}{rgb}{0.53, 0.66, 0.42}

\usepackage{mathrsfs}


  \usepackage{float}

  \usepackage{pgffor}
  \usepackage{tikz}
  \usetikzlibrary{calc,3d,arrows,positioning,shapes.misc}

  \usepackage[english]{babel}
  \usepackage{amsthm}
  \usepackage{amsmath}
  \usepackage{amssymb}
  \usepackage[latin1]{inputenc}
  \usepackage{setspace}
  \usepackage{enumerate}
  \usepackage{stmaryrd}

  \usepackage[colorlinks=true,linkcolor=black,citecolor=black,filecolor=black,urlcolor=black,menucolor=black]{hyperref}

  \usepackage{array}
  \usepackage{graphicx}
  \usepackage[babel]{csquotes}
  \usepackage{geometry}
  \usepackage{bbm}
  \usepackage{stmaryrd}
  \usepackage[all]{xy}
  \usepackage{mathrsfs}



  \geometry{paper=a4paper,left=25mm,right=25mm,top=25mm,bottom=30mm}


  \theoremstyle{plain}
  \newtheorem{theorem}{Theorem}
  
  \newtheorem{corollary}{Corollary}
  
  \newtheorem{lemma}{Lemma}
  
  \theoremstyle{definition}
  \newtheorem{definition}{Definition}
  \newtheorem{expl}{Example}
  \newtheorem{rem}{Remark}

	\allowdisplaybreaks[3]

  \def \D{\mathcal{D}}
  \def \E{\mathcal{E}}
  \def \F{\mathcal{F}}
  \def \H{\mathcal{H}}

  \def \R{\mathbb{R}}
  
  \def \transpose{\mathsf{T}}

  \newcommand {\sign} {\mathop \mathrm{sign}}

  \newcommand {\supp} {\mathop \textup{supp}}
  \def \chara{\mathbf{1}}

 \newcommand {\dist} { \mathop \textup{dist} \nolimits}
\newcommand{\p}{\partial}
\newcommand{\dL}{\,\mathrm{d}}
\newcommand{\ddt}{\frac{\mathrm{d}}{\mathrm{d}t}}

  \title[Weak-strong uniqueness for volume-preserving mean curvature flow]{Weak-strong uniqueness for volume-preserving \\mean curvature flow}

\author{Tim Laux}
\address{Tim Laux, Hausdorff Center for Mathematics, University of Bonn,  Villa Maria, Endenicher Alllee 62, 53115 Bonn, Germany}
\email{tim.laux@hcm.uni-bonn.de}

  \begin{document}

    \begin{abstract}
	In this note, we derive a stability and weak-strong uniqueness principle for volume-preserving mean curvature flow. 
	The proof is based on a new notion of volume-preserving gradient flow calibrations, which is a natural extension of the concept in the case without volume preservation recently introduced by Fischer et al.~[\href{https://arxiv.org/abs/arXiv:2003.05478}{arXiv:2003.05478}]. 
	The first main result shows that any strong solution with certain regularity is calibrated. 
	The second main result consists of a stability estimate in terms of a relative entropy, which is valid in the class of distributional solutions to volume-preserving mean curvature flow.
		
	\medskip
    
  \noindent \textbf{Keywords:} Mean curvature flow, volume-preservation, constrained gradient flows, weak solutions, weak-strong uniqueness, relative entropy method, calibrated geometry, gradient-flow calibrations.

  \medskip

\noindent \textbf{Mathematical Subject Classification}:
	53E10 (primary), 
	53C38, 
	35B35, 
	53A10  
  \end{abstract}
\maketitle



\section{Introduction}

	Volume-preserving mean curvature flow is the most basic geometric evolution equation for closed hypersurfaces that preserves the enclosed volume. 
	More precisely, the equation reads
	\begin{align}\label{eq:introVPMCF}
	V=-H+\lambda \quad \text{on }\Sigma(t),
	\end{align}
	where $V$ and $H$ denote the normal velocity and the mean curvature of the evolving surface $\Sigma(t)=\p \Omega(t)$, respectively, and 
	\begin{align}\label{eq:introlambda}
		\lambda = \lambda(t) := \frac1{\H^{d-1}(\Sigma(t))} \int_{\Sigma^(t)} H \dL \H^{d-1}
	\end{align} 
	is the Lagrange-multiplier corresponding to the volume constraint $|\Omega(t)|=|\Omega(0)| = m$. 
	This system has a gradient-flow structure as is seen at the energy dissipation relation for the area functional $E[\Sigma]=\H^{d-1}(\Sigma)$
	\begin{align*}
	\ddt E[\Sigma(t)]
	= \int_{\Sigma(t)} VH\dL  \H^{d-1}
	=- \int_{\Sigma(t)} V^2 \dL \H^{d-1},
	\end{align*}
	which holds for sufficiently regular solutions of~\eqref{eq:introVPMCF}--\eqref{eq:introlambda}. 
	Precisely,~\eqref{eq:introVPMCF}--\eqref{eq:introlambda} is the $L^2$-gradient flow of the area functional restricted to the ``manifold'' $\{\Sigma=\partial \Omega \subset \R^d \colon |\Omega| = m\}$ which encodes the volume constraint. 
	The equation arises as the singular limit of the nonlocal Allen--Cahn equation by Rubinstein and Sternberg~\cite{RubinsteinSternberg} and is a common model for coarsening processes in which the phase volume is preserved. 
	
	\medskip
	
	Gage~\cite{GageArea} and Escher and Simonett~\cite{EscherSimonett} established the existence of classical solutions for volume-preserving mean curvature flow for short time in two respectively, higher dimensions.
	However, singularities may appear in finite time, even in the case of planar curves~\cite{MayerSimonett}. 
	To describe the evolution through these singular events, several notions of weak solutions have been considered. 
	Mugnai, Seis and Spadaro~\cite{MugnaiSeisSpadaro} constructed solutions based on an energy-convergence assumption using an (almost) volume-preserving version of the scheme considered by Luckhaus and Sturzenhecker~\cite{LucStu}. 
	Swartz and the author~\cite{LauxSwartz} proved the convergence of the volume-preserving thresholding scheme, an efficient numerical algorithm, under a similar condition.
	The latter result also applies to certain multiphase systems with a volume constraint. 
	Also for the nonlocal Allen--Cahn equation~\cite{RubinsteinSternberg}, such a convergence result can be derived, see the work of Simon and the author~\cite{LauxSimon}. 
	In fact, this result applies to any number of phases any selection of which may carry a volume constraint.
	Volume-preserving mean curvature flow can also be formulated for evolving varifolds by extending Brakke's notion~\cite{brakke} of mean curvature flow to this volume-preserving case.
	Takasao~\cite{Takasao} showed that solutions to a slightly modified version of the nonlocal Allen--Cahn equation due to Golovaty~\cite{Golovaty} converges to this varifold solution.
	Recently, Takasao~\cite{TakasaoHigherD} refined his methods by slightly relaxing the volume constraint in the approximation and only recovering the precise volume preservation in the sharp-interface limit, which in particular allowed him to extend his earlier result~\cite{Takasao} to higher dimensions.
	The idea of relaxing the volume constraint in the approximation is in some sense inspired by~\cite{MugnaiSeisSpadaro}. 
	Although volume-preserving mean curvature flow does not obey a naive comparison principle, there is also a way to make the powerful machinery of viscosity solutions work in the case of volume-preserving mean curvature flow by fixing the Lagrange-multiplier for competitors as was shown by Kim and Kwon~\cite{KimKwon}.
	
	\medskip
	
	In this note, we want to address the consistency of weak solutions from~\cite{LauxSimon,LauxSwartz,MugnaiSeisSpadaro} with classical solutions. 
	A priori, it is not evident that these weak solutions agree with the unique strong solution (as long as the latter exists). 
	To draw this connection between these solution concepts, we extend the notion of gradient-flow calibrations introduced in the recent work by Fischer, Hensel, Simon and the author~\cite{FHLS} to the volume-preserving case 
	and show that any sufficiently regular classical solution is calibrated in this sense, see Theorem~\ref{thm:exGFC}.
	Then, in Theorem~\ref{thm:ws}, we show that every calibrated flow is unique and stable in the class of distributional solutions.
	The proofs are self-contained and elementary. 
	The main novelty of this work is a suitable extension $B$ of the velocity field in the definition of gradient-flow calibrations. 
	Instead of an ad-hoc extension by nearest-point projection onto the classical solution, we solve a Neumann--Laplace equation to guarantee that next to the usual conditions, $B$ also satisfies the incompressibility condition $\nabla \cdot B=0$, at least with a linear error as one moves away from the interface.
	Surprisingly, with this construction, no additional estimate on the closeness of the respective Lagrange-multipliers is needed to derive the relative entropy inequality.
	
	\medskip
	
	The relative entropy method and the notion of gradient-flow calibrations in~\cite{FHLS} has led to several recent results for geometric evolution equations. 
	The method can be used to prove quantitative convergence of the Allen--Cahn equation to mean curvature flow as was shown by Fischer, Simon and the author~\cite{FischerLauxSimon}.
	One of the main advantages of the method is its simplicity and its applicability in vectorial problems as it does not require a spectral analysis of the linearized Allen--Cahn operator and is not based on the comparison principle. 
	Liu and the author~\cite{LauxLiu} combined the relative entropy method with weak convergence methods to derive the sharp-interface dynamics of isotropic-nematic phase transitions in liquid crystals.
	Most recently, Fischer and Marveggio~\cite{FischerMarveggio} extended the result~\cite{FischerLauxSimon} to the vector-valued Allen--Cahn equation and proved its convergence to multiphase mean curvature flow. 
	Previous to this result, only formal arguments~\cite{BronsardReitich} and the conditional result~\cite{LauxSimon} were known.
	One can also lower the assumptions on the weak solution to the bare minumum of a suitable optimal energy-dissipation relation as was shown by Hensel and the author~\cite{HenselLauxVarifold}, which underlines the importance of the underlying gradient-flow structure of~\eqref{eq:introVPMCF}--\eqref{eq:introlambda}. 
	Also boundary conditions can be naturally incorporated in the method as was shown by 
	Hensel and Moser~\cite{HenselMoserContact}, and Hensel and the author~\cite{HenselLauxContact}.
	
	\medskip
	
	We expect that also in this volume-preserving version, the method will be a useful tool for further work, such as quantitative convergence results for phase-field models in the sharp-interface limit or the analysis of the long-time behavior of solutions. 
	The former has been done in a qualitative way in the previously mentioned works~\cite{LauxSwartz, Takasao, LauxSimon, TakasaoHigherD}.
	The latter problem has been addressed with different methods in~\cite{GageArea, HuiskenVol, EscherSimonett,JulinMoriniPonsiglioneSpadaro, DeGennaroKubin}.
	Another interesting possible future application of these methods is a local minimality criterion for constant mean curvature hypersurfaces with respect to volume-preserving distortions.
	We mention that the methods developed here should naturally extend to the setting of varifold solutions like the ones constructed by Takasao~\cite{Takasao, TakasaoHigherD}.
	In the unconstrained case of standard mean curvature flow, this has been shown in~\cite{HenselLauxVarifold}.
	Also an extension to sufficiently regular, strongly convex anisotropies in both surface tension and mobility function seems feasible. 
	In the case of standard mean curvature flow, this is part of the master's thesis~\cite{thesis_clemens_ullrich}, which also extends ideas from~\cite{Laux_anisotropic}.
	However, in the case of non-smooth or non-strongly convex anisotropies, this has not yet been explored.
	Finally, let us mention the recent work~\cite{JulinNiinikoski}, in which with completely different methods, it is shown that the scheme~\cite{MugnaiSeisSpadaro} converges to volume-preserving mean curvature flow before the onset of singularities.
		
	The remainder of this paper is organized as follows. 
	In Section~\ref{sec:main}, we state the main definitions and results.
	In Section~\ref{sec:exGFC} we construct the gradient-flow calibrations to prove Theorem~\ref{thm:exGFC}.
	Finally, in Section~\ref{sec:ws}, we prove Theorem~\ref{thm:ws} by deriving a relative entropy inequality which allows to close a Gronwall argument.

	We will use the following notation throughout. 
	We write $a\lesssim b$ if there exists a constant $C<\infty$ depending on $d$, $T^*$, and $\Sigma^*=(\Sigma^*(t))_{t\in[0,T^*]}$, such that $a\leq Cb$. 
	The Landau symbol $O$ will be used frequently. Precisely, by $a=O(b)$ we mean that there exists a constant $C<\infty$ depending on $d$, $T^*$, and $\Sigma^*=(\Sigma^*(t))_{t\in[0,T^*]}$ defined below, such that $|a| \leq C |b|$.
		
	\section{Main results}\label{sec:main}
	
	Let us first define the notion of gradient flow calibrations in the context of volume-preserving mean curvature flow. 
	\begin{definition}\label{def:GF_Cal}
		Let $\Sigma^*=(\Sigma^*(t))_{t\in[0,T^*]}$ be a one-parameter family of closed surfaces $\Sigma^*(t) = \partial \Omega^*(t) \subset \R^d$. 
		Let $\xi,B \colon \R^d\times[0,T^*]\to \R^d$, let $\vartheta \colon \R^d\times[0,T^*]\to \R$, and let $ \lambda^* \colon [0,T^*]\to \R$.
		We call the tuple $(\xi,B, \vartheta, \lambda^*)$ a \emph{gradient-flow calibration for volume-preserving mean curvature flow} if the following statements hold true.
		\begin{enumerate}[(i)]
			\item \emph{Regularity.}\label{item:reg}
			The vector field $\xi$ and the functions $\vartheta$ satisfy
			\begin{align*}
				\xi \in 
				C^{1}_c(\R^d\times[0,T^*];\R^d) \quad \text{and} \quad   \vartheta \in C^{0,1}(\R^d\times[0,T^*]) \cap L^\infty(\R^d\times[0,T^*]).
			\end{align*}
			Furthermore, for each $t\in[0,T^*]$ it holds
			\begin{align*}
				B(\cdot,t) \in C^{1,1}(\R^d;\R^d).
			\end{align*}
			\item \emph{Vanishing Divergence.} \label{item:divB}
			The vector field $B$ satisfies for each $t\in [0,T^*]$
			\begin{align}\label{eq:divB}
				\nabla \cdot B (\cdot,t)=O\big(\dist(\cdot, \Sigma^*(t))\big).
			\end{align}
			\item \emph{Normal extension and shortness.}\label{item:normal}
			The vector field $\xi$ extends the exterior unit normal vector field of $\Sigma^*$, i.e., 
			\begin{align}\label{eq:xiext}
				\xi(\cdot,t) = \nu^*(\cdot,t) \quad \text{on } \Sigma^*(t),
			\end{align}
			and it is short away from $\Sigma^*$ in the sense that there exists a constant $c>0$ such that
			\begin{align}\label{eq:xishort}
				|\xi(\cdot,t)| \leq \max\big\{(1-c\dist^2(x,\Sigma^*(t)), 0\big\},
			\end{align}
			\item \emph{Approximate transport equations.}\label{item:transport}
			They weight $\vartheta$ is transported to first order
			\begin{align}\label{eq:transp_weight}
					\left(\p_t \vartheta + (B\cdot \nabla)\vartheta \right)(\cdot,t) = O\big(\dist(\cdot,\Sigma^*(t))\big),
			\end{align}
			and the length of $\xi$ to second order
			\begin{align}\label{eq:transp_absxi}
				\left(\p_t |\xi|^2 + (B\cdot \nabla) |\xi|^2\right)(\cdot,t) = O\big(\dist^2(\cdot,\Sigma^*(t))\big).
			\end{align}
			Furthermore, there exists a constant $C<\infty$ and a function $f\colon \R^d\times[0,T^*]\to \R$ with $\|f(\cdot,t)\|_{L^\infty} \leq C$ for all $t\in[0,T^*]$  such that the vector field $\xi$ is almost transported by $B$ in the sense that 
			\begin{align}\label{eq:transp_xi}
				\left(\p_t \xi + (B\cdot \nabla ) \xi + (\nabla B)^{\transpose} \xi \right)(\cdot,t)
				= f(\cdot,t)\,\xi(\cdot,t)+ O\big(\dist(\cdot,\Sigma^*(t))\big).
			\end{align}

			\item \emph{Geometric evolution equation.}\label{item:GEE}
			It holds
			\begin{align}\label{eq:extGEE}
				B(\cdot,t)\cdot \xi(\cdot,t) +\nabla \cdot \xi(\cdot,t) -\lambda^* (t)= O\big(\dist(\cdot,\Sigma^*(t))\big)
			\end{align}
			and the function $\lambda^*\colon[0,T^*]\to \R $ is given by
			\begin{align}\label{eq:deflambda_cali}
					\lambda^*(t) := \frac1{\H^{d-1}(\Sigma^*(t))} \int_{\Sigma^*(t)} \nabla \cdot \xi(\cdot,t) \dL \H^{d-1}.
			\end{align}
			
			\item \emph{Sign condition on and coercivity of transported weight.} \label{item:signweights}
			We have
			\begin{align*}
				\vartheta(\cdot, t) &<0 \quad \text{in }\Omega^*(t),
				\\ \vartheta(\cdot, t) &>0 \quad \text{in }\R^{d} \setminus \overline{\Omega^*(t)}.
			\end{align*}
			Furthermore, there exists a constant $c>0$ such that
			\begin{align}\label{eq:thetacoercive}
				\min\{\dist(\cdot, \Sigma^*(t)) ,c\} \leq |\vartheta(\cdot,t)|.
			\end{align}
			
		\end{enumerate}
	In case such a gradient-flow calibration exists for $\Sigma^*$, we call $\Sigma^*$ a \emph{calibrated flow}.
	\end{definition}
	
	All the quantities $\xi, B, \vartheta, \lambda^*$ in the definition have natural interpretations. 
	First, $\xi$ is an extension of the normal vector field $\nu^*$. 
	Second, $B$ is an extension of the velocity vector field $V^*\nu^*$ with unprescribed tangential part but with the additional property that it is solenoidal, which is compatible with the volume-preservation of the PDE~\eqref{eq:VPMCF}. 
	Third, $\vartheta$ is a suitably truncated version of the signed distance function to $\Sigma^*(t)$. 
	Lastly, $\lambda^*=\lambda^*(t)$ corresponds precisely to the Lagrange-multiplier~\eqref{eq:introlambda} appearing in the PDE~\eqref{eq:introVPMCF}.
	
	Note carefully, that the extended velocity vector field $B(\cdot,t)$ does not need to point in normal direction on $\Sigma^*(t)$. 
	In fact, as will be seen in our construction, in general $B(\cdot,t)$ will have  a nontrivial tangential component, which is of course compatible with the geometric invariance of the evolution equation~\eqref{eq:introVPMCF}--\eqref{eq:introlambda}. 
	
	On a technical note, it is interesting that we do not need to impose any assumption on the dependence of $B$ on the time variable $t$. 
	The map $t\mapsto B(\cdot,t)$ does not have to be measurable, let alone continuous in any sense.
	
	\medskip
	
	The first main result states that every classical solution to volume-preserving mean curvature flow (with some regularity assumption stated in  Definition~\ref{def:strong} below) is calibrated in the sense of Definition~\ref{def:GF_Cal}.
	
	\begin{theorem}\label{thm:exGFC}
			Let $\Sigma^*=(\Sigma^*(t))_{t\in[0,T^*]}$ be a regular solution to volume-preserving mean curvature flow in the sense of Definition~\ref{def:strong} below.
			Then there exists a gradient-flow calibration $(\xi, B, \vartheta, \lambda^*)$ of $\Sigma^*$.
	\end{theorem}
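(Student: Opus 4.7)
My plan is to construct the calibration $(\xi,B,\vartheta,\lambda^*)$ explicitly from the geometry of the classical flow, with all four quantities built around the signed distance function $s^*(\cdot,t)$ to $\Sigma^*(t)$ (taken negative inside $\Omega^*(t)$, positive outside). Under the regularity assumed in Definition~\ref{def:strong}, $s^*$ is $C^{1,1}$ on a uniform tubular neighborhood $U(t)=\{|s^*(\cdot,t)|<2\delta\}$, with $\nabla s^*=\nu^*$ on $\Sigma^*(t)$ and well-defined nearest-point projection $\pi(\cdot,t)$. For $\xi$, I would fix a smooth cutoff $\eta\colon\R\to[0,1]$ with $\eta(0)=1$, $\supp\eta\subset(-\delta,\delta)$, and $\eta(r)\le 1-cr^2$, and set $\xi(x,t):=\eta(s^*(x,t))\nabla s^*(x,t)$. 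This immediately gives the regularity~(i), the normal extension~\eqref{eq:xiext}, and the shortness~\eqref{eq:xishort}. For $\vartheta$, I would let $\zeta\colon\R\to\R$ be an odd, nondecreasing, Lipschitz function that equals the identity near $0$ and is globally bounded, and set $\vartheta:=\zeta\circ s^*$, which satisfies~(i) and~(vi). Finally, $\lambda^*$ is defined directly by~\eqref{eq:deflambda_cali}; since $\nabla\cdot\xi=H^*$ on $\Sigma^*$, this agrees with the classical Lagrange multiplier driving the flow.

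The core of the proof is the construction of the velocity extension $B$. The classical equation $V^*=-H^*+\lambda^*$ together with~\eqref{eq:deflambda_cali} yields the compatibility identity $\int_{\Sigma^*(t)}V^*\,\dL\H^{d-1}=0$, which is precisely volume preservation. I would begin with the naive normal extension $B_0(x,t):=V^*(\pi(x,t),t)\,\nabla s^*(x,t)$, smoothly truncated outside $U(t)$. A Fermi-coordinate computation gives $\nabla\cdot B_0=V^*H^*+O(\dist(\cdot,\Sigma^*))$, which fails~\eqref{eq:divB} at leading order. To absorb this obstruction without disturbing $B\cdot\nu^*=V^*$ on $\Sigma^*$, I would set $B:=B_0+\nabla\psi$, where $\psi(\cdot,t)$ is obtained by solving, separately on the interior $\Omega^*(t)$ and on $B_R\setminus\overline{\Omega^*(t)}$ for a fixed large ball $B_R\supset\bigcup_t U(t)$, the Neumann--Laplace problem
\[
\Delta\psi=g\text{ in the domain},\qquad\partial_\nu\psi=0\text{ on its boundary},
\]
with $g$ a chosen smooth extension of $-V^*H^*$ off $\Sigma^*$. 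The homogeneous Neumann condition on $\Sigma^*$ preserves $B\cdot\nu^*=V^*$ there, while $\Delta\psi=-V^*H^*+O(\dist)$ near $\Sigma^*$ yields the required $\nabla\cdot B=O(\dist)$. Solvability of both Neumann problems reduces, after an appropriate choice of $g$, precisely to $\int_{\Sigma^*}V^*=0$; this is where the volume-preserving structure of the PDE enters. Elliptic regularity yields $\psi(\cdot,t)\in C^{2,1}$ on each side, hence $B\in C^{1,1}$ on each, and a careful choice of the extension $g$ glues the two pieces together into a globally $C^{1,1}$ vector field on $\R^d$.

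The remaining items are verified by elementary calculus, using the standard identities $\partial_t s^*+V^*(\pi(\cdot,t),t)=O(\dist)$, $\Delta s^*=H^*+O(\dist)$, and $D^2s^*(\nabla s^*,\cdot)=0$, together with the construction-level identity $B\cdot\nabla s^*=V^*+O(\dist)$. Differentiating $\vartheta=\zeta\circ s^*$ and $|\xi|^2=\eta(s^*)^2$ yields~\eqref{eq:transp_weight} and~\eqref{eq:transp_absxi}; the vectorial equation~\eqref{eq:transp_xi} follows from the analogous first-order transport of $\nabla s^*$ along the flow, producing an error of the form $f\xi$ with $f$ bounded and absorbing the normal component of $(\nabla B)^\transpose\nu^*$. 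The geometric evolution identity~\eqref{eq:extGEE} is immediate on $\Sigma^*$ from $B\cdot\xi+\nabla\cdot\xi-\lambda^*=V^*+H^*-\lambda^*=0$ and extends to the required $O(\dist)$ bound globally by Lipschitz continuity of the ingredients. The main obstacle, as flagged in the introduction, is the construction of $B$ itself: achieving simultaneously the global $C^{1,1}$ regularity across $\Sigma^*$, the normal trace $B\cdot\nu^*=V^*$, and the divergence estimate $\nabla\cdot B=O(\dist)$. This overdetermined-looking problem is solvable precisely because of the volume-preservation identity $\int_{\Sigma^*}V^*=0$, and the Neumann--Laplace construction is the main novelty that replaces the ad-hoc normal-projection extension used in~\cite{FHLS}.
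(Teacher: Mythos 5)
Your choices of $\xi$, $\vartheta$, $\lambda^*$ and your verification of the transport and evolution identities match the paper's construction, and the idea of repairing the divergence of the velocity extension by solving a Neumann--Laplace problem is also the right one. The gap is in the last step of your construction of $B$: you solve $\Delta\psi=g$, $\p_\nu\psi=0$ \emph{separately} on $\Omega^*(t)$ and on $B_R\setminus\overline{\Omega^*(t)}$ and then assert that ``a careful choice of the extension $g$ glues the two pieces together into a globally $C^{1,1}$ vector field.'' This does not work. The homogeneous Neumann condition only matches the normal component of $\nabla\psi$ across $\Sigma^*(t)$; the tangential gradient of $\psi$ (and a fortiori the second derivatives, which you need even for $C^1$ regularity of $B$, let alone $C^{1,1}$) is determined nonlocally by $g$ on each side through the solution operator, and there is no available degree of freedom in $g$ that forces the two one-sided boundary $2$-jets of $\psi$ to coincide --- that matching problem is overdetermined. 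This is precisely the obstruction the paper flags when it discards the ``overoptimistic'' two-sided ansatz (``it is not even clear that the tangential component of $B(\cdot,t)$ would be continuous across the interface''). The fix, which you can adopt verbatim for your corrector, is to solve the Neumann problem on \emph{one} side only, say inside $\Omega^*(t)$, and to extend the resulting potential to all of $\R^d$ by a Whitney-type $C^{2,\alpha}$ extension operator; on the outside the divergence constraint then holds only in the weakened form $\nabla\cdot B=O\big(\dist(\cdot,\Sigma^*(t))\big)$ by Lipschitz continuity of $\nabla\cdot B$, which is all that Definition~\ref{def:GF_Cal}~\eqref{item:divB} requires.

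Two smaller inaccuracies. First, in your scheme the solvability of the corrector problem is \emph{not} equivalent to $\int_{\Sigma^*(t)}V^*\dL\H^{d-1}=0$: with homogeneous Neumann data the compatibility condition is $\int_{\Omega^*(t)}g\dL x=0$, which you arrange by adjusting $g$ away from $\Sigma^*(t)$ and which has nothing to do with $V^*$. (In the paper's version, where the boundary datum is $V^*$ and the source vanishes, the compatibility condition is exactly $\int_{\Sigma^*(t)}V^*=0$.) Volume preservation is still needed, but in your argument it actually enters through the consistency of the evolution identity~\eqref{eq:extGEE} with the definition~\eqref{eq:deflambda_cali} of $\lambda^*$. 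Second, Schauder theory yields $\psi(\cdot,t)\in C^{2,\alpha}$ for $\alpha<1$, not $C^{2,1}$; this borderline point is glossed over in the paper as well and is harmless for the estimates actually used, but it should not be claimed as an immediate consequence of ``elliptic regularity.''
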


	\begin{definition}\label{def:strong}
		Let $\Sigma^* = (\Sigma^*(t))_{t\in[0,T^*)}$, with $\Sigma^*(t)=\partial \Omega^*(t)$ and $\Omega^*(t)$ bounded, say, $\Omega^*(t) \subset B_{R^*}(0)$ for all $t\in[0,T^*]$. 
		Then we call $\Sigma^*(t)$ a \emph{regular solution of volume-preserving mean curvature flow} if
		$\Sigma^*(t)$ is of class $C^{3,\alpha}$ with normal velocity field $V^*$ of class $C^{2,\alpha}$, and for all $t \in [0,T^*]$ it holds
		\begin{align}\label{eq:VPMCF}
			V^* = -H^* +\lambda^*\quad \text{on } \Sigma^*(t),
		\end{align}
		where $\lambda^* = \lambda^*(t)$ is the Lagrange-multiplier corresponding to the volume-constraint $|\Omega(t)| = |\Omega(0)| =:m^*$, which is explicitly given by
		\begin{align}\label{eq:lambda*}
			\lambda^* (t) = \frac1{\H^{d-1}(\Sigma^*(t))} \int_{\Sigma^*(t)} H^*\dL \H^{d-1}.
		\end{align}
	\end{definition}
	
	We now want to state the precise definition of distributional solution to volume-preserving mean curvature flow used in this work. 
	To this end, let us introduce some notation from the theory of functions of bounded variation and sets of finite perimeter. 
	We use the (standard) notation 
	\begin{align*}
		E[\chi(\cdot,t)] := \int_{\R^d}| \nabla \chi(\cdot,t)| =\sup \bigg\{ \int_{\R^d} (\nabla \cdot \xi) \chi(\cdot,t) \dL x \colon \xi\in C^1(\R^d;\R^d), \, |\xi| \leq 1\text{ in } \R^d\bigg\}
	\end{align*} 
	to denote the total mass of the time-slice of the total variation measure $|\nabla \chi|$, which corresponds to the perimeter of the set $\{ \chi(\cdot,t)=1\}$. 
	Furthermore, we denote the (measure-theoretic) exterior normal to the set of finite perimeter $\{\chi(\cdot,t)=1\} \subset \R^d$ by $\nu(\cdot,t) =- \frac{\nabla \chi(\cdot,t) } {|\nabla \chi(\cdot,t)|}$, which satisfies $\nabla \chi(\cdot,t) = - \nu(\cdot,t) |\nabla \chi(\cdot,t)|$.
	Now we are in the position to state the definition of distributional solutions.
	
	\begin{definition}\label{def:weak}
		A measurable function $\chi \colon \R^d\times(0,T) \to \{0,1\} $ is called a \emph{distributional solution to volume-preserving mean curvature flow} if there exists a $|\nabla \chi|$-measurable function $V \colon \R^d \times (0,T)\to \R$ and a measurable function $\lambda\colon(0,T)\to \R$ such that the following statements hold. 
		\begin{enumerate}[(i)]
			\item  \label{item:V} \emph{Normal velocity.}
			For all test functions $\zeta\in C^1(\R^d\times[0,T))$ and almost every $T'\in (0,T)$ it holds
			\begin{align}
			\notag\int_{\R^d} \zeta(\cdot,T') \chi(\cdot,t)\dL x
			- \int_{\R^d} & \zeta(\cdot,0)\chi(\cdot,0) \dL x 
			\\ \label{eq:defV} &=
			\int_{\R^d\times(0,T')} \chi \p_t \zeta \dL x \dL t +\int_{\R^d\times(0,T')} \zeta V  |\nabla \chi| \dL t.
			\end{align}
			\item \label{item:weakevolutioneq} \emph{Evolution equation.}
			For all test vector fields $B\in C^1(\R^d ;\R^d)$ and almost every $t\in (0,T)$ it holds
			\begin{align}\label{eq:weakevolutioneq}
				\int_{\R^d\times\{t\}} \big(\nabla \cdot B-\nu \cdot \nabla B \, \nu \big) \, |\nabla \chi|
				= -\int_{\R^d\times\{t\}}  (V-\lambda)\nu \cdot B \,|\nabla \chi|.
			\end{align}
			
			\item\label{item:EDI} \emph{Optimal energy dissipation rate.}		
			For almost every $T'\in (0,T)$ we have
			\begin{align}\label{eq:EDI}
				E[\chi(\cdot,T')] +  \int_{\R^d\times(0,T')}  V^2  |\nabla \chi| \dL t \leq E[\chi(\cdot,0)].
			\end{align}
			
			\item \label{item:VolPres} \emph{Volume preservation.} 
			For almost every $t\in(0,T)$ 
			\begin{align}\label{eq:VolPres}
				\int_{\R^d} \chi(\cdot,t) \dL x = \int_{\R^d} \chi(\cdot,0)\dL x.
			\end{align}
			
			\item \label{item:lambdaL^2}
			\emph{Square-integrable Lagrange-multiplier.}
			For any $T\in (0,T^*)$ there exists a constant $C_{\lambda}(T)<\infty$ such that
			\begin{align}\label{eq:lambdaL^2}
				\int_0^T \lambda^2(t)\dL t \leq C^2_{\lambda}(T).
			\end{align}
		\end{enumerate}
	\end{definition} 
	
	\begin{rem}
		Items~\eqref{item:V}--\eqref{item:weakevolutioneq} precisely correspond to the weak formulation in~\cite{LauxSwartz}.
		
		The optimal energy-dissipation rate in Item~\eqref{item:EDI} is the natural rate which is satisfied by any classical solution. We note that such a sharp inequality is at the heart of the definition of gradient flows~\cite{Serfaty,AGS} and has been verified for the standard mean curvature flow by Otto and the author~\cite{LauxOttoDeGiorgi}; see also~\cite{LauxLelmi} for the case of multiple phases.
		
		Finally, for the solutions constructed in~\cite{LauxSwartz, LauxSimon}, the $L^2$-bound holds with 
		\begin{align}
		C_{\lambda}^2(T) \lesssim (1+T)\Big(1+\big(E(\chi(\cdot,0))\big)^4\Big),
		\end{align}
		which follows immediately from the analogous bounds for the approximation schemes, see~\cite[Proposition 1.12]{LauxSwartz} and~\cite[Proposition 4.3]{LauxSimon}, respectively.
		
		In the case of the implicit time discretization from~\cite{MugnaiSeisSpadaro}, the $L^2$-bound on the Lagrange multiplier has been established, too, while the sharp energy-dissipation relation above is expected to hold for this scheme as well, but has not yet been derived.
	\end{rem}
	
	As in the unconstrained case of standard mean curvature flow~\cite{FHLS, Laux-LectureNotes} we define the relative entropy
	\begin{align}
	\label{eq:defErel}
	\E[\chi,\Sigma^*](t) := \int_{\R^d\times\{t\}} (1-\nu(x,t) \cdot \xi(x,t)) |\nabla \chi|
	= E[\chi(\cdot,t)]- \int_{\R^d} \chi(x,t) (\nabla \cdot \xi)(x,t) \dL x
	\end{align}
	and the volume error
	\begin{align}\label{eq:defF}
	\F[\chi,\Sigma^*](t)
	:= \int_{\R^d} |\chi(x,t)- \chi_{\Omega^*(t)}(x) | |\vartheta(x,t)| \, \dL x 
	=\int_{\R^d} (\chi(x,t)- \chi_{\Omega^*(t)}(x) ) \vartheta(x,t) \, \dL x.
	\end{align}

	Now we are in the position to formulate our second main result, which states that any calibrated flow is unique and stable in the class of distributional solutions to volume-preserving mean curvature flow.
	
	\begin{theorem}\label{thm:ws}
		Let $\Sigma^*=(\Sigma^*(t))_{t\in [0,T^*]}$ be a calibrated flow according to Definition~\ref{def:GF_Cal}. 
		Furthermore, let $\chi$ be a distributional solution of volume-preserving mean curvature flow in the sense of Definition~\ref{def:weak}. 
		Then, the relative entropy $\E(t)$ and the volume error $\F(t)$ given in~\eqref{eq:defErel} and~\eqref{eq:defF}, respectively, satisfy
		\begin{align}
			\E[\chi,\Sigma^*](t)+\F[\chi,\Sigma^*](t) \leq e^{C\sqrt{T}(1+C_\lambda(T))} \big(\E[\chi,\Sigma^*](0)+\F[\chi,\Sigma^*](0)\big) \quad \text{for a.e.\ } t\in(0,T^*).
		\end{align}
		In particular, if $\chi(x,0) = \chi_{\Omega_0}(x)$ for a.e.\ $x\in\R^d$, then
		\begin{align}\label{eq:uniqueness}
			\chi(x,t) = \chi_{\Omega^*(t)}(x) \quad \text{for a.e.\ } (x,t) \in \R^d\times(0,T^*).
		\end{align}
	\end{theorem}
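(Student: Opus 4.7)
The plan is to follow the relative entropy method of~\cite{FHLS} and derive a differential inequality of the form
\begin{align*}
\tfrac{d}{dt}\bigl(\E(t) + \F(t)\bigr) \leq C\bigl(1 + |\lambda(t)|\bigr)\bigl(\E(t) + \F(t)\bigr),
\end{align*}
from which Gronwall, together with $\int_0^T(1 + |\lambda(t)|)\,\dL t \leq \sqrt{T}(1 + C_\lambda(T))$ (Cauchy--Schwarz and~\eqref{eq:lambdaL^2}), yields the asserted exponential stability bound. The uniqueness statement~\eqref{eq:uniqueness} is then immediate: when $\chi(\cdot,0) = \chi_{\Omega_0} = \chi_{\Omega^*(0)}$ one has $(\E+\F)(0) = 0$, and $\F(t) = 0$ forces $\chi(\cdot,t) = \chi_{\Omega^*(t)}$ almost everywhere by the strict sign condition on $\vartheta$ in Definition~\ref{def:GF_Cal}\eqref{item:signweights}.

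I would treat $\F$ and $\E$ in parallel. For $\F$, I test~\eqref{eq:defV} with $\zeta = \vartheta$ and use that $\vartheta|_{\Sigma^*(t)} = 0$ to kill the boundary flux from the classical evolution, then replace $\p_t\vartheta$ by $-(B\cdot\nabla)\vartheta + O(\dist)$ via~\eqref{eq:transp_weight}; an integration by parts leaves only contributions of order $\F$ through~\eqref{eq:divB} and~\eqref{eq:thetacoercive}, together with a boundary term that combines with $\int\vartheta V|\nabla\chi|$ and is absorbed by Cauchy--Schwarz against $\E$. For $\E$, I combine~\eqref{eq:EDI} with the identity from testing~\eqref{eq:defV} with $\zeta = \nabla\cdot\xi$; after an integration by parts in $x$ and substitution of~\eqref{eq:transp_xi} for $\p_t\xi$, the weak evolution equation~\eqref{eq:weakevolutioneq} applied to the calibration field $B(\cdot,t)$ replaces $\int(V-\lambda)\,\nu\cdot B\,|\nabla\chi|$ by the curvature-type integrand $-(\nabla\cdot B - \nu\cdot\nabla B\,\nu)$. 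Exploiting the cost-free identity $\int V|\nabla\chi| = 0$ (from~\eqref{eq:VolPres} together with~\eqref{eq:defV} at $\zeta\equiv 1$), the leading-order terms assemble into the coercive dissipation
\begin{align*}
-\tfrac{1}{2}\int (V - \lambda + \nabla\cdot\xi)^2 |\nabla\chi|,
\end{align*}
which vanishes on the classical solution because $V^* = -H^* + \lambda^*$ and $\nabla\cdot\xi|_{\Sigma^*} = H^*$.

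The main novelty, anticipated in the introduction, is the handling of the Lagrange-multiplier term $\lambda\int\nu\cdot B\,|\nabla\chi|$. After integration by parts this equals $-\lambda\int\chi\,\nabla\cdot B\,\dL x$; using volume preservation of the classical flow together with $B\cdot\nu^*|_{\Sigma^*} = V^*$ (which follows from~\eqref{eq:xiext} and~\eqref{eq:extGEE} on $\Sigma^*$), one has $\int\chi_{\Omega^*(t)}\nabla\cdot B\,\dL x = \int_{\Sigma^*(t)} V^*\,\dL\H^{d-1} = 0$, so the expression reduces to $-\lambda\int(\chi - \chi_{\Omega^*})\nabla\cdot B\,\dL x = O(|\lambda|\,\F)$ by~\eqref{eq:divB} and~\eqref{eq:thetacoercive}. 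In particular, no pointwise control on $|\lambda - \lambda^*|$ is required; the only input on the Lagrange multipliers is the integrated bound~\eqref{eq:lambdaL^2}.

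The most delicate step is the bookkeeping in the $\E$-derivation: after substituting~\eqref{eq:transp_xi} and combining with the $B$-test of~\eqref{eq:weakevolutioneq}, one must verify that every residual is genuinely of order $\E + \F$ up to a factor at most $1 + |\lambda|$. This will rely on splittings such as $\nu\cdot B = \xi\cdot B + (\nu - \xi)\cdot B$ together with~\eqref{eq:transp_absxi} to control the length of $\xi$, and on the a priori coercivity estimates $\int\dist^2(\cdot,\Sigma^*)|\nabla\chi| \lesssim \E$ (via~\eqref{eq:xishort}) and $\int\dist(\cdot,\Sigma^*)|\chi - \chi_{\Omega^*}|\,\dL x \lesssim \F$ (via~\eqref{eq:thetacoercive}) to package all $\dist$- and $\dist^2$-weighted remainders; once these are tamed, the Gronwall closure is routine.
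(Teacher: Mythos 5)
Your proposal follows essentially the same route as the paper: the same integrated Gronwall setup with $\int_0^T(1+|\lambda|)\,\dL t\leq \sqrt{T}(1+C_\lambda(T))$, the same completion of squares producing the coercive dissipation $\tfrac12\int(V+\nabla\cdot\xi-\lambda^*)^2|\nabla\chi|$ (note it should be $\lambda^*$, not $\lambda$, in that square), and, crucially, the same treatment of the Lagrange-multiplier term via $\int\chi_{\Omega^*(t)}\nabla\cdot B\,\dL x=0$ together with~\eqref{eq:divB} and~\eqref{eq:thetacoercive}, so that no estimate on $|\lambda-\lambda^*|$ is needed. The only cosmetic difference is that the paper derives $\int_{\Sigma^*(t)}B\cdot\xi\,\dL\H^{d-1}=0$ purely from the calibration axioms~\eqref{eq:extGEE} and~\eqref{eq:deflambda_cali} rather than through the identification $B\cdot\nu^*=V^*$, which is the appropriate argument since the theorem only assumes a calibrated flow.
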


	Clearly, Theorems~\ref{thm:exGFC} and~\ref{thm:ws} imply the weak-strong uniqueness of solutions to volume-preserving mean curvature flow.
	
	\begin{corollary}
		As long as a strong solution to volume-preserving mean curvature flow according to Definition~\ref{def:strong} exists, any weak solution in the sense of Definition~\ref{def:weak} with the same initial conditions has to agree with it.
	\end{corollary}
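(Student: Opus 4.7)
The corollary follows immediately by chaining the two main theorems, and the plan is simply to make this concatenation explicit while checking that the initial values of the relative entropy and volume error vanish. Let me outline the steps.

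First, given a strong solution $\Sigma^*=(\Sigma^*(t))_{t\in[0,T^*]}$ in the sense of Definition~\ref{def:strong}, I would apply Theorem~\ref{thm:exGFC} to produce a gradient-flow calibration $(\xi,B,\vartheta,\lambda^*)$ for $\Sigma^*$. This turns $\Sigma^*$ into a calibrated flow in the sense of Definition~\ref{def:GF_Cal}, so Theorem~\ref{thm:ws} applies against \emph{any} distributional solution $\chi$ from Definition~\ref{def:weak}. This already provides the stability estimate
\begin{align*}
\E[\chi,\Sigma^*](t)+\F[\chi,\Sigma^*](t) \leq e^{C\sqrt{T}(1+C_\lambda(T))}\big(\E[\chi,\Sigma^*](0)+\F[\chi,\Sigma^*](0)\big)
\end{align*}
for almost every $t\in(0,T^*)$.

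Second, I would verify that the assumption of matching initial data $\chi(\cdot,0) = \chi_{\Omega^*(0)}$ a.e.\ forces both initial errors to vanish. For the volume error $\F$ this is immediate from the integrand $\chi(\cdot,0)-\chi_{\Omega^*(0)}$. For the relative entropy $\E$, using $\nabla\chi(\cdot,0) = -\nu(\cdot,0)|\nabla\chi(\cdot,0)|$ with $\nu(\cdot,0) = \nu^*(\cdot,0)$ on $\Sigma^*(0)$, the normal-extension property \eqref{eq:xiext} gives $\xi(\cdot,0)=\nu^*(\cdot,0)$ on $\Sigma^*(0)$, so the integrand $1-\nu\cdot\xi$ vanishes $|\nabla\chi(\cdot,0)|$-a.e.\ and $\E[\chi,\Sigma^*](0)=0$. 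Plugging this into the stability estimate yields $\E[\chi,\Sigma^*](t) = \F[\chi,\Sigma^*](t) = 0$ for a.e.\ $t\in(0,T^*)$.

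Third, to conclude the pointwise identification, I would use the coercivity of the weight $\vartheta$ from~\eqref{eq:thetacoercive}: since $\min\{\dist(x,\Sigma^*(t)),c\} \leq |\vartheta(x,t)|$, the vanishing of
\begin{align*}
\F[\chi,\Sigma^*](t) = \int_{\R^d} |\chi(x,t)-\chi_{\Omega^*(t)}(x)| \, |\vartheta(x,t)| \dL x
\end{align*}
implies that the symmetric difference $\{\chi(\cdot,t)=1\} \triangle \Omega^*(t)$ has zero Lebesgue measure away from $\Sigma^*(t)$, and hence everywhere since $\Sigma^*(t)$ is a $C^{2,\alpha}$ hypersurface and thus Lebesgue-negligible. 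This gives \eqref{eq:uniqueness}, and the corollary follows.

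There is no real obstacle here: the proof is a one-line citation followed by the observation that the Definition~\ref{def:GF_Cal} properties (normal extension \eqref{eq:xiext} and weight coercivity \eqref{eq:thetacoercive}) were designed exactly so that equality of initial data translates into vanishing of $\E(0)+\F(0)$ and so that vanishing of $\F(t)$ translates into a.e.\ equality of indicator functions. The only point that requires a brief sentence is the justification that $\E[\chi,\Sigma^*](0)=0$, which is the subtlest of the three initial-data computations above.
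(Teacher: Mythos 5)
Your proposal is correct and follows exactly the route the paper intends: the corollary is stated as an immediate consequence of Theorem~\ref{thm:exGFC} (the strong solution is calibrated) combined with the uniqueness clause~\eqref{eq:uniqueness} of Theorem~\ref{thm:ws}, whose proof already contains the observation that $\F(t)=0$ forces $\chi(\cdot,t)=\chi_{\Omega^*(t)}$ a.e. Your additional verifications that $\E(0)=\F(0)=0$ for matching initial data are accurate and merely spell out details the paper leaves implicit.
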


	\section{Construction of gradient-flow calibration for volume-preserving mean curvature flow}
	\label{sec:exGFC}
	
	The main purpose of this section is to prove  Theorem~\ref{thm:exGFC}. 
	Before proving this general result, it is worth mentioning the following basic example of the round sphere, for which the construction of the gradient-flow calibration is straight-forward. 
	
	\begin{expl}
		Let $\Omega^*(0) = B_R$ be a ball. 
		Then the volume-preserving mean curvature flow starting from $\Omega^*(0)$ is static: $\Omega^*(t) = B_R$. 
		Then one simply defines $\xi (x,t) := \xi(x) = \zeta(|x|-R) \frac{x}{|x|}$ and $\vartheta(x,t) := \vartheta(x) = \tau(|x|-R)$, where $\zeta$ is a cut-off around $0$ and $\tau$ is a smooth truncation of the identity. (These functions will be discussed in more detail in the following proof for the general case.) 
		Furthermore, we set $B(x,t):=0$
		and $\lambda^*(t):=\frac{d-1}{R}$.
		It is now straight-forward to see that $(\xi,B,\lambda^*)$ is a gradient flow calibration for $\Sigma^*(t) = \partial \Omega^*(t)$.
		
		The same reasoning also applies to a finite union of balls by making the localization scale in the functions $\zeta$ and $\tau$ sufficiently small.
	\end{expl}

	In the general case, the construction of $B$ is slightly more involved, and this is the heart of the matter. 
	Since the divergence-constraint~\eqref{eq:divB} is underdetermined, it is rather natural to make the ansatz $B=\nabla \varphi$ for some potential $\varphi \colon \R^d\times[0,T^*] \to \R$.
	
	As a first (overoptimistic) idea, one could hope to find an extension such that in fact $\nabla \cdot B(\cdot,t)=0$ in \emph{all} of $\R^d$. 
	This would imply that $\varphi(\cdot,t) $ would solve the following Neumann--Laplace problem
	\begin{align}
		\begin{cases}
			\Delta \varphi(\cdot,t) =0 & \text{in } \R^d \setminus \Sigma^*(t), \\
			\nu^*(\cdot,t)\cdot \nabla \varphi(\cdot,t) = V^*(\cdot,t)  &\text{on } \Sigma^*(t).
		\end{cases}
	\end{align}
	However, it turns out that this is not compatible with the regularity requirements. 
	It is not even clear that the tangential component of $B(\cdot,t)$ would be continuous across the interface $\Sigma^*(t)$
	
	Therefore, we will construct an extension $B(\cdot,t)$ which is be solenoidal only \emph{inside} $\Sigma^*(t)$, which then implies the still slightly stronger version $\nabla \cdot B(\cdot,t) =O\big(\dist(\cdot, \Omega^*(t))\big)$ of~\eqref{eq:divB}.

		\begin{proof}[Proof of Theorem~\ref{thm:exGFC}]
			
			By the assumed regularity of $\Sigma^*$, there exists $\delta=\delta(\Sigma^*)>0$ such that for all $t$, the signed distance function $s(\cdot,t)$ has the same regularity as $\Sigma^*$ in the tubular neighborhood $\mathcal{U}_\delta= \{(x,t)\in \R^d\times[0,T^*]\colon |s(x,t)| < \delta\}$ of $\Sigma^*$, see for example Ambrosio's beautiful contribution~\cite{AmbrosioDancer} or the author's lecture notes~\cite{Laux-LectureNotes}. 
			Here and throughout we use the sign convention $s(\cdot,t) <0 $ in $\Omega^*(t)$ so that $\nabla s(\cdot,t) = \nu^*(\cdot,t)$ on $\Sigma^*(t)$.
			We denote the timeslice of the neighborhood $\mathcal{U}_\delta$ by $\mathcal{U}_\delta(t) := \{x\in \R^d\colon |s(x,t)| < \delta\}$, $t\in [0,T^*]$.

			\emph{Step 1: Construction.}
			The ansatz for the extension of the normal vector field and the weight function are the ad-hoc constructions
			\begin{align*}
			\xi (x,t) := \zeta(s(x,t)) \nabla s(x,t) \quad \text{and} \quad \vartheta(x,t) := \tau(s(x,t)),
			\end{align*}
			where  $\zeta$ is a smooth cutoff function satisfying 
			$	\zeta(0)=1$ and  $\zeta(z) =0 $ for $|z| \geq \delta$, 
			and $\tau$ is a smooth and non-decreasing truncation of the identity with $\tau (z) = z$ for $|z| \leq \frac\delta2$ and $\tau(z)= \sign(z)$ for $|z| \geq \delta.$
			The parameter $\lambda^*$ is exactly given by its namesake, the Lagrange-multiplier given in~\eqref{eq:lambda*}.
			
			The construction of $B$ is slightly more involved. 
			We fix $t\in [0,T^*]$ and let $\varphi$ solve the following Neumann--Laplace problem
			\begin{align}
				\label{eq:PDEphi1}\Delta \varphi &=0 & & \text{in } \Omega^*(t), &\\
				\label{eq:BCphi1}\nu^*\cdot \nabla \varphi &= V^*(\cdot,t) & & \text{on } \Sigma^*(t).& 
			\end{align}
			The existence of this potential $\varphi$ with $\int_{\Omega^*(t)} \varphi \dL x=0$ follows from elementary elliptic theory
			thanks to the compatibility of the boundary datum with the vanishing right-hand side:
			\begin{align*}
				\int_{\Sigma^*(t)} V^*(\cdot,t)\dL \H^{d-1} 
				= \ddt |\Omega^*(t)| =0.
			\end{align*}
			By Schauder boundary regularity theory for the Neumann problem, see~\cite[Theorem 4.1]{Nardi} or \cite[Theorem 95]{LeoniLectureNotes}, we have  
			\begin{align*}
				\|\varphi\|_{C^{2,\alpha}(\overline{ \Omega^*(t)})}
				 \leq C(\Omega^*(t)) \|V^*(\cdot,t)\|_{C^{1,\alpha}(\partial\Omega^*(t))} \leq C(\Sigma^*(t)).
			\end{align*}
			
			To improve the regularity, we differentiate the equation.
			Although this is folklore (in particular away from the boundary), we provide a short argument to make sure that the estimates hold uniformly on all of $\overline{\Omega^*(t)}$; to this end, we will choose a suitable coordinate frame which is adapted to the geometry of $\partial \Omega^*(t)$.
			More precisely, in order to show that $\varphi \in C^{3,\alpha}(\overline{\Omega^*(t)})$, or in other words $\nabla \varphi \in C^{2,\alpha}(\overline{\Omega^*(t)})$, it is sufficient to prove that for any orthogonal frame $(X_1,\ldots,X_d)$ of class $C^{2,\alpha}(\overline{\Omega^*(t)})$ with $X_i \cdot \nu^* =0$ for $i=1,\ldots,d-1$ and $X_d=\nu^*$ on $\partial \Omega^*(t)$ we have $X_i \cdot \nabla \varphi \in C^{2,\alpha}(\overline{\Omega^*(t)})$ for $i=1,\ldots,d$, which we will show now.
			
			Given such a frame $(X_1,\ldots,X_d)$ and an index $i=1,\ldots,d-1$ belonging to a ``tangent'' vector field, we see that $\psi:=X_i \cdot \nabla \varphi$ solves
			\begin{align*}
				\Delta \psi &= \Delta X_i \cdot \nabla \varphi + 2\nabla X_i : \nabla^2 \varphi& & \text{in } \Omega^*(t), &\\
				\nu^*\cdot \nabla \psi &=   \nabla \varphi \cdot (\nu^*\cdot \nabla) X_i+ (X_i\cdot\nabla) V^*(\cdot,t) - \nabla \varphi \cdot (X_i\cdot \nabla) \nu^* & &\text{on } \Sigma^*(t),& 
			\end{align*}
			where we have used $\Delta \varphi = 0$ in the first line and $\nu^*\cdot \nabla \varphi = V^*(\cdot,t)$ in the second one. 
			We now recognize that the right-hand side of the PDE is of class $C^{0,\alpha}(\overline{\Omega^*(t)})$ and the Neumann boundary datum is of class $C^{1,\alpha}(\partial \Omega^*(t))$ since $V^*(\cdot,t) \in C^{2,\alpha}(\partial \Omega^*(t))$ and $\partial \Omega^*(t) \in C^{3,\alpha}$, which implies $(X_i \cdot \nabla )\nu^* \in C^{1,\alpha}(\partial \Omega^*(t))$. 
			Hence we can once more apply Schauder regularity for the Neumann--Laplace problem~\cite[Theorem 4.1]{Nardi} to assert the desired regularity~$\psi \in C^{2,\alpha}(\overline{\Omega^*(t)})$. 
			For the ``normal'' field $X_d$, we simply observe that $\psi := X_d\cdot \nabla \varphi$ solves the Dirichlet--Laplace problem
			\begin{align*}
				\Delta \psi & = \Delta X_d \cdot \nabla \varphi + 2\nabla X_d : \nabla^2 \varphi & & \text{in } \Omega^*(t), &\\
				\psi & = \nu^*\cdot \nabla \varphi = V^*(\cdot,t) & & \text{on } \Sigma^*(t).& 
			\end{align*}			
			The right-hand side of the PDE is identical to the previous case and the Dirichlet datum is of class $C^{2,\alpha}(\partial \Omega^*(t))$ by assumption. 
			Hence, we can apply standard Schauder boundary regularity theory for the Dirichlet--Laplace problem~\cite[Theorem 6.8]{GilbargTrudinger} to assert that also in this case $\psi \in C^{2,\alpha}(\overline{\Omega^*(t)})$. 
			Hence we obtain 
			\begin{align*}
				\|\varphi\|_{C^{3,\alpha}(\overline{\Omega^*(t)})}  \leq C(\Sigma^*(t)).
			\end{align*}%
			
			Now we extend $\varphi$ using a standard extension theorem, e.g.~\cite[Lemma 6.37]{GilbargTrudinger}, to a function~$\bar \varphi \in C^{3,\alpha}(\R^d)$ with the same regularity such that $\bar\varphi=\varphi $ in $\overline{\Omega^\ast(t)}$ and $\bar\varphi=0$ in $\R^d\setminus B_{2R^*}(0)$;
			in particular $\bar\varphi \in C^{2,1}(\R^d)$.
			Then we set
			\begin{align}\label{eq:regB}
				B(\cdot,t) := \nabla \bar\varphi  \in C^{1,1}(\R^d).
			\end{align}

			\emph{Step 2: Verification of all properties in Definition~\ref{def:GF_Cal}.} 
			Now we want to verify that the tuple $(\xi,B,\vartheta,\lambda^*)$ is a gradient-flow calibration according to Definition~\ref{def:GF_Cal}. 
			
			The regularity in Item~\eqref{item:reg} directly follows from the construction in \emph{Step 1}. 
			The PDE~\eqref{eq:PDEphi1} guarantees~$\nabla \cdot B(\cdot,t)=0$ in~$\Omega^*(t)$, and by the regularity of~$\bar\varphi$, we have the bound
			\begin{align}
				\nabla \cdot B(\cdot,t)= O\big(\dist(\cdot,\Omega^*(t))\big),
			\end{align}
			which in particular implies Item~\eqref{item:divB}.
			Item~\eqref{item:normal} follows directly from the construction of~$\xi$. 
			The evolution equation~\eqref{eq:extGEE} in Item~\eqref{item:GEE} is also built into the construction of~$B$, namely through the boundary condition~\eqref{eq:BCphi1}. This guarantees that on~$\Sigma^*(t)$,~\eqref{eq:extGEE} simply reduces to~\eqref{eq:VPMCF}. 
			By the Lipschitz continuity of all functions appearing on the left-hand side of~\eqref{eq:extGEE}, this implies the validity of~\eqref{eq:extGEE}.
			Item~\eqref{item:signweights} follows directly from the construction of~$\vartheta$.
			
			Now we turn to the transport equations in Item~\eqref{item:transport}. 
			Next to $B(\cdot,t)$, we will also work with the trivial extension of $B(\cdot,t)$ to the neighborhood $\mathcal{U}_\delta(t)$ which we denote by $\bar B(\cdot,t) := B(\cdot,t) \circ P_{\Sigma^*(t)}$.
			We start with the derivation of~\eqref{eq:transp_weight}. Since $\vartheta$ is a function of the signed distance function to $\Sigma^*(t)$, it holds 
			\begin{align*}
				\partial_t \vartheta + (\bar B\cdot \nabla) \vartheta =0\quad \text{in } \mathcal{U}_\delta(t),
			\end{align*}
			cf.~\cite{AmbrosioDancer, Laux-LectureNotes}, and hence 
			\begin{align*}
				\partial_t \vartheta + ( B\cdot \nabla) \vartheta = (B-\bar B)\cdot \nabla \vartheta \quad \text{in } \mathcal{U}_\delta(t)
			\end{align*}
			and the assertion follows from the Lipschitz continuity of the functions $B$ and $\vartheta$. 
			To justify the higher-order accuracy in the transport equation~\eqref{eq:transp_absxi} for the lenght of $\xi$, we write
			\begin{align}\label{eq:derive_transp_absxi}
			\big(\p_t  +(B\cdot \nabla)\big)|\xi|^2 
			=\big(\p_t  +(\bar B\cdot \nabla)\big)(\zeta^2\circ s)
			+ (B-\bar B) \cdot \nabla (\zeta^2\circ s).
			\end{align}
			The first term vanishes exactly in the neighborhood $\mathcal{U}_\delta(t)$ of $\Sigma^*(t)$. For the second one, we use the Lipschitz estimate $|B-\bar B| \leq C |s|$ and compute $|\nabla (\zeta^2\circ s)| = 2 (\zeta\circ s) |\zeta'\circ s| |\nabla s| \leq C |s|$, where we have used $\zeta'(0)=0$ and the regularity of all functions involved in the last step. Hence the right-hand side of~\eqref{eq:derive_transp_absxi} is indeed $O(s^2)$.
			The approximate transport equation for $\xi$ follows similarly: We compute
			\begin{align*}
			\p_t \xi + (B\cdot \nabla ) \xi + (\nabla B)^{\transpose} \xi 
			=& \Big(\big( \p_t+ (B\cdot \nabla)\big) (\zeta\circ s)\Big)\nabla s 
			\\&+(\zeta\circ s) \big( 	\p_t \nabla s + (B\cdot \nabla ) \nabla s + (\nabla B)^{\transpose} \nabla s\big).
			\end{align*}
			Arguing as before, the first right-hand side term is $O(s)$. For the second term,  next to $\bar B$, we also need to smuggle in $\nabla \bar B$, which will produce the leading term $f\,\xi$ on the right-hand side of~\eqref{eq:transp_xi}.
			In $\mathcal{U}_\delta(t) \supset	\supp (\zeta\circ s)$, we have
			\begin{align*}
			\big( 	\p_t \nabla s + (B\cdot \nabla ) \nabla s + (\nabla B)^{\transpose} \nabla s\big)
			=& \big( 	\p_t \nabla s + (\bar B\cdot \nabla ) \nabla s + (\nabla \bar B)^{\transpose} \nabla s\big)
			\\&+ (B-\bar B) \cdot \nabla^2 s + (\nabla B - \nabla \bar B)^{\transpose}\nabla s.
			\end{align*}
			The first right-hand side term vanishes identically in $\mathcal{U}_\delta(t)$. 
			The second term is $O(s)$, while the last term satisfies 
			\begin{align*}
			(\nabla B - \nabla \bar B)^{\transpose}\nabla s
			=& (\nabla B - (\nabla B\circ P_{\Sigma^*}) \nabla P_{\Sigma^*})^{\transpose} \nabla s
			\\=& (\nabla B -\nabla B\circ P_{\Sigma^*})^{\transpose} \nabla s
			+(I_d- \nabla P_{\Sigma^*}) (\nabla B\circ P_{\Sigma^*})^{\transpose} \nabla s.
			\end{align*}
			The first-right hand side term is $O(s)$ since $\nabla B$ is Lipschitz. Furthermore,  since $\nabla P_{\Sigma^*} = I_d - \nabla s \otimes \nabla s+O(s)$, the second term is of the form
			\begin{align*}
			\nabla s \otimes \nabla s (\nabla B \circ P_{\Sigma^*})^{\transpose}  \nabla s+O(s)
			= (\nabla s \cdot (\nabla B \circ P_{\Sigma^*})^{\transpose}  \nabla s) \nabla s+O(s).
			\end{align*}
			Recalling that the whole error term was multiplied by $\zeta\circ s$, we obtain the approximate transport equation~\eqref{eq:transp_xi} with $f:= \chara_{\mathcal{U}_\delta(t)}\nabla s \cdot (\nabla B \circ P_{\Sigma^*})  \nabla s$. 
			Note that~\eqref{eq:regB} implies $\|f(\cdot,t)\|_{L^\infty(\R^d)} \leq C(\Sigma^*)$. 
			This concludes the proof of Theorem~\ref{thm:exGFC}.
		\end{proof}
		
	\section{Relative entropy inequality and weak-strong uniqueness principle}
	\label{sec:ws}
		
	The main purpose of this section is the proof of the relative entropy inequality in Theorem~\ref{thm:ws}. 
	Let us first collect the basic coercivity properties of the relative entropy functional.
	
	\begin{lemma}\label{lem:coerc}
		The relative entropy $\E$ defined in~\eqref{eq:defErel} satisfies
		\begin{align}
			   \label{eq:tilt} \int_{\R^d\times\{t\}} \frac12 |\nu-\xi|^2 |\nabla \chi| &\leq  \E[\chi,\Sigma^*](t),
			\\ \label{eq:dist2} \int_{\R^d\times\{t\}} \vartheta^2 |\nabla \chi| &\lesssim \E[\chi,\Sigma^*](t).
		\end{align}
	\end{lemma}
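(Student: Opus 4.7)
Both bounds reduce to pointwise inequalities that are then integrated against $|\nabla\chi(\cdot,t)|$.

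For the tilt estimate \eqref{eq:tilt}, I would simply expand the square and exploit the shortness of $\xi$. Using $|\nu|=1$, one has the identity
\begin{align*}
(1-\nu\cdot\xi) - \tfrac12|\nu-\xi|^2 = \tfrac12\bigl(1-|\xi|^2\bigr),
\end{align*}
and the right-hand side is nonnegative by \eqref{eq:xishort}, which in particular gives $|\xi(\cdot,t)|\leq 1$. Integrating the resulting pointwise inequality $\tfrac12|\nu-\xi|^2 \leq 1-\nu\cdot\xi$ against $|\nabla\chi(\cdot,t)|$ yields \eqref{eq:tilt}.

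For the distance estimate \eqref{eq:dist2}, the plan is to show the pointwise bound $\vartheta^2 \lesssim 1-\nu\cdot\xi$ on all of $\R^d$. On the one hand, Cauchy--Schwarz together with $|\xi|\leq 1$ gives
\begin{align*}
1-\nu\cdot\xi \geq 1 - |\xi| \geq \tfrac12\bigl(1-|\xi|^2\bigr),
\end{align*}
and then \eqref{eq:xishort} yields $1-|\xi|^2 \gtrsim \min\{\dist^2(\cdot,\Sigma^*(t)),1\}$, splitting according to whether $c\,\dist^2\leq 1$ (where $1-|\xi|^2\geq (1-|\xi|)\geq c\,\dist^2$) or $c\,\dist^2\geq 1$ (where $\xi=0$). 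On the other hand, the Lipschitz regularity of $\vartheta$ in Item~\eqref{item:reg}, combined with the fact that $\vartheta(\cdot,t)$ vanishes on $\Sigma^*(t)$ by continuity and the strict sign condition \eqref{item:signweights}, yields $|\vartheta(x,t)| \lesssim \dist(x,\Sigma^*(t))$; together with the global $L^\infty$-bound on $\vartheta$ (which is either built into the calibration, coming from the truncation $\tau$ in the construction, or else can be read off from the Lipschitz bound once one restricts to the $1/\sqrt{c}$-tubular neighborhood where $\xi\neq 0$), this gives $\vartheta^2 \lesssim \min\{\dist^2,1\}$. Combining both bounds produces $\vartheta^2 \lesssim 1-\nu\cdot\xi$, and integration against $|\nabla\chi(\cdot,t)|$ delivers \eqref{eq:dist2}.

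The only subtle point is matching the rates of degeneracy: the shortness of $\xi$ decays quadratically near $\Sigma^*$, while $\vartheta$ only vanishes linearly, so it is precisely the squaring of $\vartheta$ that makes the two estimates compatible. This explains the quadratic (rather than linear) shortness prescribed in \eqref{eq:xishort}. The argument is otherwise entirely algebraic, using only Items~\eqref{item:reg}, \eqref{item:normal} (through \eqref{eq:xishort}), and \eqref{item:signweights} of Definition~\ref{def:GF_Cal}.
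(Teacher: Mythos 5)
Your proof is correct and takes essentially the same route as the paper: the same algebraic identity $2(1-\nu\cdot\xi)=|\nu-\xi|^2+(1-|\xi|^2)$ for \eqref{eq:tilt}, and the combination of the quantitative shortness \eqref{eq:xishort} with the Lipschitz continuity (and boundedness) of $\vartheta$ for \eqref{eq:dist2}. The paper's proof is just a two-sentence sketch of exactly this argument, so your write-up merely supplies the details it leaves implicit.
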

	\begin{proof}
		We use the trivial identity $2(1-\xi\cdot \nu) = |\nu|^2  +|\xi|^2- 2\xi \cdot \nu + (1-|\xi|^2) = |\nu - \xi|^2 + (1+|\xi|)(1-|\xi|)$. 
		Since both terms on the right are non-negative (cf.~\eqref{eq:xishort}), the first estimate~\eqref{eq:tilt} then follows directly from the definition~\eqref{eq:defErel}, 
		and the second estimate~\eqref{eq:dist2} follows from the quantitative shortness condition~\eqref{eq:xishort} and the Lipschitz continuity of the weight function $\vartheta$.
	\end{proof}

	Now we give the proof of Theorem~\ref{thm:ws}, which partly follows the weak-strong uniqueness proof in the unconstrained case of standard mean curvature flow~\cite{FHLS}. 
	To be self-contained, we carry out the full proof here.
	Special attention will be given to the additional difficulties arising in our case with the volume-constraint.
	
	\begin{proof}[Proof of Theorem~\ref{thm:ws}]
		For notational convenience, we will suppress the dependence of the functionals on $\chi$ and $\Sigma^*$ and write $\E(t) := \E[\chi,\Sigma^*](t)$ and $\F(t) := \F[\chi,\Sigma^*](t)$.
		
		\emph{Step 1: First manipulations of relative entropy and bulk error.}
		For almost every $T\in (0,T^*)$, using the definition~\eqref{eq:defV} of~$V$ with $\zeta = \nabla \cdot \xi$, we may compute
		\begin{align*}
			\E(T) - \E(0) = E[\chi(\cdot,T)] - E[\chi(\cdot,0)]
			+ \int_{\R^d\times(0,T)} \Big( -V (\nabla \cdot \xi) - \partial_t \xi \cdot \nu \Big) |\nabla \chi|.
		\end{align*}
		Using the optimal energy-dissipation relation~\eqref{eq:EDI} and the fact that $\int_{\R^d\times(0,T)} V| \nabla \chi| =0$ (which follows from~\eqref{eq:defV} with $\zeta=1$ together with~\eqref{eq:VolPres}) to smuggle in the constant $\lambda^\ast = \lambda^\ast(t)$, we obtain
		\begin{align}\label{eq:ws_ddtE_1}
			\E(T) -\E(0) \leq 
			 \int_{\R^d\times(0,T)} \Big( - V^2   -V (\nabla \cdot \xi - \lambda^\ast) - \partial_t \xi \cdot \nu \Big) |\nabla \chi| \dL t.
		\end{align}
		We denote the ``(negative) dissipation functional'' on the right by
		\begin{align*}
			\D(t):= \int_{\R^d\times\{t\}} \Big( - V^2   -V (\nabla \cdot \xi - \lambda^\ast) - \partial_t \xi \cdot \nu \Big) |\nabla \chi|, 
		\end{align*}
		so that we have
		\begin{align*}
			\E(T) -\E(0) \leq \int_0^T \D(t)\dL t.
		\end{align*}
		
		For the bulk error $\F(t)$ defined in~\eqref{eq:defF}, using the definition~\eqref{eq:defV} of the normal velocity $V$ and the fact that $\vartheta =0$ on $\Sigma^*$, we may compute for almost every $T\in (0,T^*)$
		\begin{align*}
			\F(T) - \F(0) 
			&= \int_{\R^d\times(0,T)} \p_t \vartheta (\chi-\chi_{\Omega^*}) \dL x\dL t
			+ \int_{\R^d\times(0,T)} \vartheta V |\nabla \chi| \dL t.
		\end{align*}
		In analogy to the previous discussion for $\E(t)$, we denote the integrand on the right-hand side by
		\begin{align*}
		\widetilde \D(t) :=\int_{\R^d\times\{t\}}& \p_t \vartheta (\chi-\chi_{\Omega^*}) \dL x
		+ \int_{\R^d\times\{t\}} \vartheta V |\nabla \chi|.
		\end{align*}
		
		\emph{Step 2: Dissipation estimates. We claim that there exists a null set $\mathcal{N}\subset (0,T^*)$ and a constant $C=C(d,T^*,\Sigma^\ast)<\infty$ such that for all $ t\in (0,T^*)\setminus \mathcal{N}$ we have the estimates
		\begin{align}\label{eq:DleqE+F}
			\D(t)
			+ \frac12\int_{\R^d\times\{t\}} \big| V\nu -(B\cdot \xi ) \xi \big|^2 \,|\nabla \chi|  
			\leq C (1+|\lambda(t)|) (\E(t) +\F(t))
		\end{align}
		and 
		\begin{align}\label{eq:tildeDleqE+F+diss}
			\widetilde D(t) \leq C (\E(t)+\F(t)) + \frac12\int_{\R^d\times\{t\}} \big| V\nu -(B\cdot \xi ) \xi \big|^2 \,|\nabla \chi|.
		\end{align}}
		
		We fix $t\in (0,T^*) \setminus \mathcal{N}$, where the null set $\mathcal{N}$ is such that~\eqref{eq:weakevolutioneq} holds for all $t\in (0,T^*) \setminus \mathcal{N}$. 
		To ease notation, we omit the domain of integration $\R^d\times\{t\}$ in the following derivation of~\eqref{eq:DleqE+F} and~\eqref{eq:tildeDleqE+F+diss}.
		
		Testing the weak form~\eqref{eq:weakevolutioneq} of the evolution equation with the vector field $B$ from the gradient-flow calibration, we may rewrite $\D(t)$ as
		\begin{align*}
			\D(t) =
			 \int \Big( - V^2  -V (\nabla \cdot \xi - \lambda^*) 
			+(V-\lambda)\nu \cdot B 
			+ \nabla \cdot B - \nu \cdot \nabla B \nu - \partial_t \xi \cdot \nu \Big) |\nabla \chi|.
		\end{align*}
		After first decomposing the vector field $B$ into its ``normal'' and ``tangential'' components $B=(B\cdot \xi ) \xi+ (I_d-\xi \otimes\xi) B$, then completing the two squares (involving $V$ and $V\nu$, respectively), and adding zero to make the transport term $\p_t \xi + (B\cdot \nabla)\xi + (\nabla B)^{\transpose} \xi$ appear in the last integral, we arrive at
		\begin{align*}
			\D(t)
			+ &\frac12 \int \big( V+\nabla \cdot \xi -\lambda^*\big)^2\, |\nabla \chi| 
			+ \frac12 \int \big| V\nu -(B\cdot \xi ) \xi \big|^2 \,|\nabla \chi| 
			\\\leq& \int\frac12 \big( (\nabla \cdot \xi -\lambda^* )^2 + (B\cdot\xi)^2 |\xi|^2\big) \,|\nabla \chi| 
			+ \int \big(V\nu \cdot (I_d - \xi \otimes \xi) B -\lambda \nu \cdot B \big)\, |\nabla \chi|
			\\&+ \int \big(   \nabla \cdot B - \nu \cdot \nabla B \nu +\nu \cdot (B\cdot \nabla) \xi + \xi \cdot(\nu \cdot \nabla)B \big) \, |\nabla \chi|
			\\&-\int \nu \cdot \left( \p_t \xi +(B\cdot \nabla) \xi +(\nabla B)^{\transpose} \xi \right)  \,|\nabla \chi|.
		\end{align*}
		Now we complete another square, use $-\nu\otimes \nu +\xi \otimes \nu = -(\nu-\xi)\otimes(\nu-\xi) -\xi\otimes \nu +\xi\otimes\xi$, and also manipulate the last term to express the right-hand side as
		\begin{align} 
			\notag&\int\frac12 \big( \nabla \cdot \xi -\lambda^* + B\cdot \xi\big)^2\,|\nabla \chi| 
			+\frac12 \int \big(|\xi|^2-1\big) (B\cdot \xi)^2\,|\nabla \chi|
			- \int (\nabla \cdot \xi -\lambda^*)	B\cdot \xi \, |\nabla \chi|
			\\\notag& +\int \big(V\nu \cdot (I_d - \xi \otimes \xi) B -\lambda \nu \cdot B \big)\, |\nabla \chi|
			\\\notag& +\int (\nabla \cdot B) ( 1-\xi \cdot \nu) \, |\nabla \chi|
			+\int (\nabla \cdot B) \xi \cdot \nu |\nabla \chi|
			-\int (\nu -\xi) \cdot \nabla B (\nu - \xi) |\nabla \chi| 
			\\\notag& -\int \nu \cdot (\xi \cdot \nabla ) B|\nabla \chi| + \int \nu \cdot (B\cdot \nabla ) \xi |\nabla \chi| 
			\\ \notag&-\int  (\nu-\xi)\cdot\left( \p_t \xi +(B\cdot \nabla) \xi +(\nabla B)^{\transpose} \xi \right) \,|\nabla \chi|
			\\& \label{eq:before sym}-\int \xi \cdot \left( \p_t \xi +(B\cdot \nabla)\xi \right) \,|\nabla \chi|.
		\end{align}
		By symmetry and Gauss' theorem
		\begin{align*}
			0=\int \chi \,\nabla \cdot \big(\nabla \cdot ( B\otimes \xi - \xi \otimes B)\big) \dL x
			 = \int \nu  \cdot \big(\nabla \cdot ( B\otimes \xi - \xi \otimes B)\big)  \, |\nabla \chi|.
		\end{align*}
		Expanding the divergence in the last integral and reordering terms yields
		\begin{align*}
			\int \big((\nabla \cdot B) \xi \cdot \nu -\nu\cdot (\xi \cdot \nabla )B + \nu \cdot (B\cdot \nabla) \xi \big) |\nabla \chi| = \int (\nabla \cdot \xi) B\cdot \nu |\nabla \chi|.
		\end{align*}
		We may use this symmetry to replace three of the terms in~\eqref{eq:before sym} by the single term $(\nabla \cdot \xi) B\cdot \nu$ to write~\eqref{eq:before sym} as
		\begin{align*}
			&\int\frac12 \big( \nabla \cdot \xi -\lambda^* + B\cdot \xi\big)^2\,|\nabla \chi| 
			+\frac12 \int \big(|\xi|^2-1\big) (B\cdot \xi)^2\,|\nabla \chi|
			- \int (\nabla \cdot \xi -\lambda^*)	B\cdot \xi \, |\nabla \chi|
			\\&+\int (\nabla \cdot \xi -\lambda) B\cdot \nu \, |\nabla \chi|
			+\int V\nu \cdot (I_d - \xi \otimes \xi) B \, |\nabla \chi|
			\\& +\int (\nabla \cdot B) ( 1-\xi \cdot \nu) \, |\nabla \chi|
			-\int (\nu -\xi) \cdot \nabla B (\nu - \xi) |\nabla \chi| 
			\\&-\int (\nu-\xi) \cdot \left( \p_t \xi +(B\cdot \nabla) \xi +(\nabla B)^{\transpose} \xi \right) \,|\nabla \chi|
			\\& - \frac12 \int\left( \p_t |\xi|^2 +(B\cdot \nabla) |\xi|^2 \right) |\nabla \chi|.
		\end{align*}
		Finally, combining the third to fifth terms of the last display and using $B\cdot (\nu-\xi) -\nu\cdot(I_d-\xi \otimes \xi) B = (\nu \cdot \xi - 1) (B\cdot \xi)$, we obtain in total
		\begin{align}
			\notag \D(t) &+ \frac12 \int \big( V+\nabla \cdot \xi -\lambda^*\big)^2 |\nabla \chi| + \frac12 \int \big| V\nu -(B\cdot \xi ) \xi \big|^2 \,|\nabla \chi| 
			\\\notag\leq&\int\frac12 \big( \nabla \cdot \xi -\lambda^* + B\cdot \xi\big)^2\,|\nabla \chi| 
			+\frac12\int \big(|\xi|^2-1\big) (B\cdot \xi)^2\,|\nabla \chi|
			\\\notag&- \int (\nabla \cdot \xi -\lambda^*)	(1-\xi \cdot \nu )B\cdot\xi\, |\nabla \chi|
			+\int (\lambda^*-\lambda) B\cdot \nu \, |\nabla \chi|
			\\\notag&+\int  (V+\nabla \cdot \xi - \lambda^*)\nu \cdot (I_d - \xi \otimes \xi) B \, |\nabla \chi|
			\\\notag& +\int (\nabla \cdot B) ( 1-\xi \cdot \nu) \, |\nabla \chi|
			-\int (\nu -\xi) \cdot \nabla B (\nu - \xi) |\nabla \chi| 
			\\\notag&-\int  (\nu-\xi) \cdot \left( \p_t \xi +(B\cdot \nabla) \xi +(\nabla B)^{\transpose} \xi \right) \,|\nabla \chi|
			\\&\label{eq:ws last step} - \frac12 \int\left( \p_t |\xi|^2 +(B\cdot \nabla) |\xi|^2 \right) |\nabla \chi|.
		\end{align} 
		We claim that the right-hand side of~\eqref{eq:ws last step} is estimated by $C(1+|\lambda(t)|)(\E(t) +\F(t))$ for some $C=C(\Sigma^*)$; we argue term-by-term. 
		
		We start with the two terms which have to be handled differently than in the case of standard mean curvature flow: the fourth term $\int(\lambda^*-\lambda) B\cdot \nu |\nabla \chi|$ looks rather worrying and seems to require a stability analysis for the Lagrange-multipliers $\lambda$ and $\lambda^*$. 
		However, since by~\eqref{eq:xiext},~\eqref{eq:extGEE}, and~\eqref{eq:deflambda_cali}
		\begin{align*}
			\int (\nabla \cdot B) \chi_{\Omega^*} \dL x
			 = \int_{\Sigma^*(t)} B \cdot \nu^* \dL \H^{d-1} 
			 &= \int_{\Sigma^*(t)} B \cdot \xi \dL \H^{d-1} 
			 \\&= \int_{\Sigma^*(t)} (-\nabla \cdot \xi +\lambda^*)  \dL \H^{d-1} 
			 = 0,
		\end{align*} 
		we have
		\begin{align*}
			\int(\lambda^*-\lambda) B\cdot \nu |\nabla \chi|
			&= (\lambda^*-\lambda)  \int \chi(\nabla \cdot B) \dL x
			\\&= (\lambda^*-\lambda)  \int (\chi-\chi_{\Omega^*}) (\nabla \cdot B) \dL x 
			\\&\lesssim (1+|\lambda|) \mathcal{F}(t),
		\end{align*}
		where we have crucially used the divergence condition~\eqref{eq:divB} on $B$ and the coercivity~\eqref{eq:thetacoercive} of the weight function. 
		Note that we also used the very rough estimate $|\lambda^*-\lambda| \leq |\lambda^*|+|\lambda| $. 
		The other term for which we have to argue differently than in the case of standard mean curvature flow is the penultimate term since the transport equation for $\xi$ is not satisfied exactly on the interface, cf.~\eqref{eq:transp_xi}. 
		Nevertheless, the leading term $f\xi$ appearing on the right-hand side of~\eqref{eq:transp_xi} is almost perpendicular to $\nu-\xi$: 
		\begin{align*}
			\bigg|\int f \xi \cdot (\nu -\xi) \,|\nabla \chi|\bigg|
			\leq \big(\sup|f| \big) \int \big( (1-\xi \cdot \nu) + (1-|\xi|^2) \big) \, |\nabla \chi|,
		\end{align*}
		which is again controlled by the relative entropy using the quantitative shortness~\eqref{eq:xishort} of $\xi$. 
		Now we argue for the remaining terms on the right-hand side of~\eqref{eq:ws last step}. 
		Thanks to the approximate evolution equation~\eqref{eq:extGEE}, the integrand of the first term is $O(s^2)$, so by~\eqref{eq:thetacoercive} and~\eqref{eq:dist2}, this term is of the desired order. The second term is small, but we do not need to give an argument for this since the term is non-negative anyways. In the third term, we simply pull out the maximum of $\big|(\nabla \cdot \xi-\lambda^*)B\cdot \xi\big|$ over $B_{R^*}(0)\times[0,T^*]$ and recognize the relative entropy functional; an analogous argument works for the sixth term.
		The fifth term is handled by Young's inequality and absorption into the first dissipation term. 
		For the remaining term in Young's inequality we use $|\nu \cdot (I_d-\xi\otimes \xi)|^2= |\nu - (\xi \cdot \nu)\xi|^2 \lesssim |\nu - \xi|^2 + (1-\nu \cdot \xi)$, which gives a contribution controlled by the relative entropy thanks to~\eqref{eq:tilt}.
		Similarly, also the seventh term is controlled. 
		For the last term, we simply use the approximate transport equation~\eqref{eq:transp_absxi}.
		
		Regarding the bulk-dissipation $\widetilde \D(t)$, we use $\int (B\cdot \nabla \vartheta) (\chi-\chi_{\Omega^*}) \dL x=  \int \vartheta B\cdot \nu |\nabla \chi|- \int (\nabla \cdot B) \vartheta (\chi-\chi_{\Omega^*}) \dL x $ to make the transport operator appear:
		\begin{align*}
			\widetilde D(t)
			=&\int_{\R^d\times\{t\}}( \p_t \vartheta +B\cdot \nabla \vartheta) (\chi-\chi_{\Omega^*}) \dL x
			+\int_{\R^d\times\{t\}} (\nabla \cdot B) \vartheta (\chi-\chi_{\Omega^*}) \dL x 
			\\& +\int_{\R^d\times\{t\}} \vartheta (V-B\cdot \nu) |\nabla \chi|.
		\end{align*}
		Now we argue term-by-term to bound the right-hand side of this identity.
		Using the transport equation~\eqref{eq:transp_weight} and the coercivity~\eqref{eq:thetacoercive} of the weight function, the first term is bounded by $C\int |\vartheta| |\chi-\chi_{\Omega^*}| \dL x = C \F(t)$. 
		The second integral is bounded by $(\sup |\nabla \cdot B|) \F(t)$. 
		For the third term, we use Young's inequality and~\eqref{eq:dist2}.
		This concludes the argument for the estimates~\eqref{eq:DleqE+F} and~\eqref{eq:tildeDleqE+F+diss}.
		
		\emph{Step 3: Conclusion.}
		Plugging the estimates~\eqref{eq:DleqE+F} and~\eqref{eq:tildeDleqE+F+diss} from \emph{Step 2} into \emph{Step 1}, we obtain
		\begin{align*}
			(\E(T)+\F(T) )-(\E(0)+\F(0))   
			\leq C \int_0^T(1+|\lambda(t)|) (\E(t) +\F(t)) \dL t
		\end{align*}
		for almost every $T\in(0,T^*)$. 
		Hence, by the $L^2$-bound~\eqref{eq:lambdaL^2}, Gronwall's inequality, and Jensen's inequality we obtain
		\begin{align}
				\E(T)+\F(T) \leq e^{C \int_0^T (1+ |\lambda(t)|) \dL t} (\E(0)+\F(0) ) 
				\leq e^{C \sqrt{T}(1+C_\lambda(T))} (\E(0)+\F(0) ) 
		\end{align}
		for almost every $T\in (0,T^*)$. 
		The uniqueness statement~\eqref{eq:uniqueness} in Theorem~\ref{thm:ws} now follows from the fact that $\F(t)=0$ implies $\chi^*(\cdot,t) = \chi(\cdot,t)$ a.e.\ in $\R^d$.
	\end{proof}

\section*{Acknowledgments}
	This project has received funding from the Deutsche Forschungsgemeinschaft (DFG, German Research Foundation) under Germany's Excellence Strategy -- EXC-2047/1 -- 390685813.
	The content of this paper was developed and parts of the paper written during a visit of the author to Centro de Investigaci\'on en Matem\'atica Pura y Aplicada (CIMPA) at  Universidad de Costa Rica. 
	The author would like to thank CIMPA and its members for the hospitality and stimulating environment.
\frenchspacing
\bibliographystyle{abbrv}
\bibliography{lit}
	
  \end{document}